\documentclass[12pt]{article}

\usepackage{mathrsfs,amsmath,amsthm,amsfonts,amssymb,amscd,cite}
\usepackage{graphicx}
\usepackage{epstopdf}
\usepackage{color}
\usepackage{hyperref}
\usepackage{tikz}
\usepackage{comment}
\usetikzlibrary{positioning, calc, decorations.pathreplacing}

\usepackage{subfigure} 

\allowdisplaybreaks[4]

\theoremstyle{definition}

\newtheorem {theorem}{Theorem}[section]
\newtheorem {lemma}[theorem]{Lemma}
\newtheorem {corollary}[theorem]{Corollary}
\newtheorem {conjecture}[theorem]{Conjecture}

\newtheorem{remark}[theorem]{Remark}

\newcommand{\E}{\mathbb{E}}

\newcommand{\Var}{\operatorname{Var}}
\newcommand{\Cov}{\operatorname{Cov}}

\usepackage{fullpage}

\title{On the Variance Fraction of the Hard-Core Model on Graphs with Bounded Maximum Degree}

\author {Weiyuan Zhang,
~~Kexiang Xu\thanks{Corresponding author. Email addresses: \texttt{weiyuanzhang@nuaa.edu.cn} (W. Zhang),
\texttt{kexxu1221@126.com} (K. Xu).   }\\\\
\small  School of Mathematics, Nanjing University of Aeronautics and Astronautics,\\
\small Nanjing, Jiangsu, 210016, PR China}

\date{}

\begin{document}
\maketitle

\begin{abstract}

The hard-core model can be used to understand the number of independent sets in graphs in extremal graph theory.
The occupancy fraction, defined by Davies \textit{et al.} in 2017 as the logarithmic derivative of the independence polynomial of a graph, is a key quantity in the hard-core model.
The variance fraction, introduced by Davies \textit{et al.} in 2025, is defined as the derivative of the occupancy fraction with respect to the logarithm of the fugacity.
Since the occupancy fraction can be obtained by integrating the variance fraction with respect to the logarithm of the fugacity, bounding the variance fraction yields the corresponding bounds on the occupancy fraction.
Moreover,  the occupancy fraction correlates, in quantity, to the independence polynomial.

In this note we provide two lower bounds on the variance fraction, proving the conjecture by Davies \textit{et al.} in 2025, for graphs with bounded maximum degree and for graphs with $n$ vertices, respectively.
We also derive lower bounds for other graph classes, including graphs with a given edge chromatic number, $d$-regular graphs, and triangle-free graphs with bounded maximum degree.

\par\vspace{2mm}

\noindent{\bfseries Keywords: Independent set; Independence polynomial; Hard-core model; Occupancy fraction; Variance fraction.}
\par\vspace{1mm}

\noindent{\bfseries  AMS Classification (2020): 05C31, 05C35, 05C69}
\end{abstract}

\section{Introduction}

\ \indent Throughout this paper we only consider finite, undirected and simple graphs.  Let $G$ be a graph with vertex set $V(G)$ and edge set $E(G)$.
The order of $G$ is the number of its vertices.
For a vertex $v\in V(G)$, we define the open neighborhood of $v$ as $N_G(v)= \{u\in V(G):uv\in E(G)\}$ and the closed neighborhood of $v$ as $N_G[v] = N_G(v)\cup\{v\}$.
The degree $d_G(u)$ of vertex $u$ is the cardinality of $N_G(u)$.
A set $S\subseteq V(G)$ in a graph $G$ is an \textit{independent set} if $G[S]$, the induced subgraph of $G$ by $S$, consists of $|S|$ isolated vertices. The \textit{independence number} $\alpha(G)$ of $G$ is the cardinality of a maximum independent set of $G$.
A \textit{clique} of a graph $G$ is an induced subgraph of $G$ that is complete.
The \textit{clique number} of $G$ is the cardinality of a maximum clique of $G$.
We denote by $K_n$ the complete graph on $n$ vertices, and by $K_{a,b}$ the complete bipartite graph whose two partite sets have sizes $a$ and $b$, respectively.
A proper vertex coloring of a graph $G$ is an assignment of colors to the vertices of $G$ such that any two adjacent vertices receive distinct colors. The \textit{chromatic number} of $G$, denoted by $\chi(G)$, is the minimum number of colors required for a proper vertex coloring of $G$.
A proper edge coloring of a graph $G$ is an assignment of colors to the edges of $G$ such that any two edges sharing a common endpoint receive distinct colors. The \textit{edge chromatic number} of $G$, denoted by $\chi'(G)$, is the minimum number of colors for a proper edge coloring of $G$. By convention, $\chi'(G)=0$ if $G$ has no edges.

The hard-core model is a probability distribution over independent sets in a graph, where a set $I$ is chosen with probability proportional to $\lambda^{|I|}$ for a positive real parameter $\lambda$. This is a widely applicable distribution that generalizes the uniform distribution (corresponding to $\lambda=1$) and possesses important properties related to conditional independence and entropy.
In extremal graph theory, the hard-core model can be used to understand the minimum and maximum numbers of independent sets in graphs.
Extremal bounds in this context have a long history and a few surprising applications, e.g. \cite{A1996,S1995,BM2019}.

Kahn \cite{K2001} states that the disjoint union of $\frac{n}{2d}$ copies of complete bipartite graph $K_{d,d}$ has the maximum number of independent sets among all $d$-regular bipartite graphs on $n$ vertices with $n$ divisible by $2d$.
Zhao \cite{Z2010} extended this result to all $d$-regular graphs.
In 2019, Sah \textit{et al.} \cite{SSSZ2019} determined the maximum number of independent sets in an irregular graph and extended their result to an upper bound in weighted form:
\[
P_G(\lambda)=\sum_{I\in \mathcal{I}(G)} \lambda^{|I|},
\]
where $\mathcal{I}(G)$ is the set of all independent sets of $G$, including the empty set.
Here $P_G(\lambda)$ is the \textit{partition function} of the hard-core model on $G$ (also called the independence polynomial of $G$).
Note that $P_G(\lambda)$ with $\lambda=1$  gives the total number $\sigma(G)$ of independent sets, including the empty set, of graph $G$.
An alternative version of $\sigma(G)$ is  $\sigma(G)=\sum\limits_{k=0}^{\alpha(G)}i_k(G)$ where $i_k(G)$ denotes the number of independent sets of size $k$ in $G$.
See \cite{Wei26+,Xu26,Xu22} for some novel results on $\sigma(G)$.
Let $G$ be a graph with $n$ vertices.
The hard-core model on $G$ is the measure $\mu_{G,\lambda}:\mathcal{I}(G)\rightarrow [0,1]$ given by
\[
\mu_{G,\lambda} (I):=\Pr[I]=\frac{\lambda^{|I|}}{P_G(\lambda)}.
\]

Borrowing some concepts from physics, Davies \textit{et al.} \cite{DJPR2017,DST2025} invented a set of terminologies. They defined the \textit{free energy density} as
\[
F_G(\lambda)=\frac{1}{n} \log P_G(\lambda),
\]
the \textit{occupancy fraction} as
\[
E_G(\lambda)=\lambda \frac{\partial}{\partial\lambda}F_G(\lambda),
\]
and the \textit{variance fraction} as
\[
V_G(\lambda)=\lambda \frac{\partial}{\partial\lambda}E_G(\lambda).
\]

Clearly, the variance fraction is the derivative of the occupancy fraction with respect to the natural logarithm of the fugacity:
\[
V_G(\lambda)=\lambda \frac{\partial E_G(\lambda)}{\partial \lambda} = \frac{\partial E_G(e^s)}{\partial s} = \frac{\partial E_G(\lambda)}{\partial \log\lambda},
\]
where $s=\log \lambda$.

We can see that
\[
n E_G(\lambda) = \E _{I\sim \mu_{G,\lambda}} (|I|) = \frac{\lambda P_{G}^{\prime}(\lambda)}{P_{G}(\lambda)}
\]
is the expected number of vertices of $G$ occupied by a random independent set from the hard-core model.
Similarly, we know that
\[
n V_G(\lambda) = \Var _{I\sim \mu_{G,\lambda}}(|I|) = \E _{I\sim \mu_{G,\lambda}} (|I|^2) - \left(\E _{I\sim \mu_{G,\lambda}} (|I|)\right)^2
\]
is the variance of the size of an independent set from the model.
If there is no confusion from context, we usually use $\E $ to represent $\E _{I\sim \mu_{G,\lambda}}$, and $\Var $ to represent $\Var _{I\sim \mu_{G,\lambda}}$.

Several bounds on the free energy density can be obtained from earlier examples of extremal results for the hard-core model.
If $G$ is a $d$-regular graph, then $F_G(\lambda)\leq F_{K_{d,d}}(\lambda)$ from combining several celebrated results of Kahn \cite{K2001} and Zhao \cite{Z2010}.
The lower bound $F_G(\lambda)\geq F_{K_{d+1}}(\lambda)$ follows from a result of Cutler and Radcliffe \cite{CR2014}.
If $d_u$ is the degree of vertex $u$ in graph $G$, then
$F_G(\lambda)\geq \frac{1}{n} \sum_{u\in V(G)}F_{K_{d_u+1}}(\lambda)$ can be  derived from a result by Sah \textit{et al.} \cite{SSSZ2019}.
 Other scholars have also done some excellent work on counting independent sets \cite{DP2023,JPP2023,Z2017,S2015,SSS2022,BBN2021}.

It is natural to consider the generalizations of the above results to the bounds on the parameters $E_G(\lambda)$ and $V_G(\lambda)$.
Davies \textit{et al.} \cite{DJPR2017} introduced a novel technique, called the occupancy method, and used it to prove the tight upper bounds on the independence polynomial and the matching polynomial of $d$-regular graphs.
In the subsequent work \cite{DJPR2018}, they considered the lower bounds on $E_G(\lambda)$ for triangle-free graphs with a given maximum degree, and derived the corresponding lower bounds on the number of independent sets using the properties of the occupancy fraction.
Cutler and Radcliffe \cite{CR2018} used the same method to obtain lower bounds on $E_G(\lambda)$ for $d$-regular graphs and, as a consequence, a tight lower bound on the independence polynomial for this class of graphs. They also obtained a lower bound on $E_G(\lambda)$ for $d$-regular triangle-free graphs.
Perarnau and Perkins \cite{PP2018} obtained an upper bound on $E_G(\lambda)$ for cubic graphs of girth at least $5$ and a lower bound on $E_G(\lambda)$ for triangle-free cubic graphs. They used these results to prove a tight upper bound and a lower bound, respectively, on the independence polynomial for these graph classes.
Cohen \textit{et al.} \cite{CPST2022} used the occupancy method to count the number of independent sets in uniform, regular, linear hypergraphs.
Davies \textit{et al.} \cite{DST2025} proved a lower bound on $E_G(\lambda)$ for (triangle-free) graphs with a given degree sequence, provided some bounds on $V_G(\lambda)$ for $n$-vertex graphs and posed the following conjecture:
\begin{conjecture}\cite{DST2025}
Let $G$ be an $n$-vertex graph.
Then $V_G(\lambda)\geq V_{K_n}(\lambda)$ for any $\lambda>0$.
If $G$ has maximum degree $\Delta$, then the lower bound can be improved to $V_G(\lambda)\geq V_{K_{\Delta+1}}(\lambda)$.
\end{conjecture}

The hard-core model can also be used to provide some bounds on certain off-diagonal Ramsey numbers and related quantities \cite{S1983}; some of these applications yield, asymptotically, the best results currently known to us. Its conditional independence properties can be exploited in probabilistic and algorithmic methods for graph coloring and related problems \cite{M2019,BKNP2022,DVKP2021,JMP2026,MS2025,GT2006,BLM2022,DVKP2020,SS2005}.

The following theorem was proved by Davies and Kang in \cite{DK2025}:

\begin{theorem}\cite{DK2025}\label{th:Ekdelta}
Let $G$ be a graph with maximum degree $\Delta$. Then, for any $\lambda>0$,
\[
E_G(\lambda)\geq E_{K_{\Delta+1}}(\lambda) = \frac{\lambda}{1+(\Delta+1)\lambda}.
\]
\end{theorem}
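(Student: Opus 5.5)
We prove the bound with the occupancy method, using the spatial Markov property of the hard-core model. Sample $I\sim\mu_{G,\lambda}$ and pick a vertex $v$ uniformly at random from $V(G)$. Then
\[
E_G(\lambda)=\frac{1}{n}\sum_{v\in V(G)}\Pr[v\in I].
\]
Let $U_v=N_G(v)\setminus I'$, where $I'$ is the set of neighbours of $v$ that are blocked, i.e.\ adjacent to some vertex of $I\setminus N_G[v]$. So $U_v$ is the set of \emph{uncovered} neighbours of $v$. Conditioned on $I\setminus N_G[v]$, the restriction of $I$ to $N_G[v]$ is distributed as the hard-core model on the graph induced by $\{v\}\cup U_v$.

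\textbf{Step 1: two identities in terms of $U_v$.} Conditioned on $U_v$, we have
\[
\Pr[v\in I\mid U_v]=\frac{\lambda}{\lambda+P_{G[U_v]}(\lambda)}
\qquad\text{and}\qquad
\Pr[v\in I\mid U_v]=\frac{\lambda}{1+\lambda}\,\Pr\bigl[I\cap N_G(v)=\emptyset \,\big|\, U_v\bigr].
\]
The second identity gives, unconditionally,
\[
\Pr[v\in I]=\frac{\lambda}{1+\lambda}\Pr\bigl[N_G(v)\cap I=\emptyset\bigr].
\]
We also have the union bound
\[
\Pr\bigl[N_G(v)\cap I\neq\emptyset\bigr]\le \sum_{u\in N_G(v)}\Pr[u\in I].
\]

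\textbf{Step 2: average over $v$.} Averaging the first identity of Step 1 over $v$, and applying the union bound to $\Pr[N_G(v)\cap I=\emptyset]$, gives
\[
E_G(\lambda)\ge \frac{\lambda}{1+\lambda}\Bigl(1-\frac{1}{n}\sum_{v}\sum_{u\in N_G(v)}\Pr[u\in I]\Bigr).
\]
The double sum counts each $u$ exactly $d_G(u)\le\Delta$ times, so
\[
\frac{1}{n}\sum_{v}\sum_{u\in N_G(v)}\Pr[u\in I]\le \Delta E_G(\lambda).
\]
Hence
\[
E_G(\lambda)\ge\frac{\lambda}{1+\lambda}\bigl(1-\Delta E_G(\lambda)\bigr).
\]

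\textbf{Step 3: rearrange.} The last inequality rearranges to
\[
E_G(\lambda)\Bigl(1+\frac{\lambda\Delta}{1+\lambda}\Bigr)\ge\frac{\lambda}{1+\lambda},
\]
that is,
\[
E_G(\lambda)\ge \frac{\lambda}{1+\lambda+\lambda\Delta}=\frac{\lambda}{1+(\Delta+1)\lambda}.
\]
Finally we check that this equals $E_{K_{\Delta+1}}(\lambda)$. Since $P_{K_{\Delta+1}}(\lambda)=1+(\Delta+1)\lambda$, the expected size of $I$ is $(\Delta+1)\lambda/(1+(\Delta+1)\lambda)$, and dividing by $n=\Delta+1$ gives $\lambda/(1+(\Delta+1)\lambda)$.

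The only step needing care is the identity $\Pr[v\in I]=\frac{\lambda}{1+\lambda}\Pr[N_G(v)\cap I=\emptyset]$. It follows by pairing each independent set $J$ avoiding $N_G[v]$ with $J\cup\{v\}$. Both $J$ and $J\cup\{v\}$ have no vertex in $N_G(v)$, and $J\cup\{v\}$ has weight $\lambda$ times that of $J$. Summing over all such $J$ gives the identity. The union-bound step is where slack enters, and it is tight exactly for cliques, since there the events $\{u\in I\}$ for $u\in N_G(v)$ are disjoint. So no obstacle is expected beyond verifying this pairing.
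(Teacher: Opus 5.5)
Your proof is correct and is essentially the paper's own argument (the Remark after Theorem~\ref{th:kdelta}, via \eqref{eq:EX_lb}) written vertex by vertex. Summing your pairing identity over $v$ is equivalent to $\E(\varphi(I))=\E(X_G)/\lambda$, and summing your union bound is the expectation of $\varphi(I)\ge n-(\Delta+1)|I|$; the only blemish is that Step~2 actually uses the second (unconditional) identity of Step~1 rather than the first, and the $U_v$/spatial-Markov setup is never needed.
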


The following two results of ours resolve the conjecture of Davies \textit{et al.} \cite{DST2025}.

\begin{theorem}\label{th:kdelta}
Let $G$ be a graph with maximum degree $\Delta$. Then, for any $\lambda>0$, we have
\[
V_G(\lambda)\geq V_{K_{\Delta+1}}(\lambda) = \frac{\lambda}{(1+(\Delta+1)\lambda)^2},
\]
with equality if and only if $G$ is a disjoint union of copies of $K_{\Delta+1}$.
\end{theorem}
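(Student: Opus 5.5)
The plan is to adapt the local (``occupancy method'') argument behind Theorem~\ref{th:Ekdelta} from first moments to second moments. First, write
\[
nV_G(\lambda)=\Var(|I|)=\sum_{v\in V(G)}\Cov(\mathbf 1_{v\in I},|I|).
\]
Then estimate each summand by conditioning on the configuration away from $v$. Fix $v$ and let $J=I\setminus N[v]$. Let $U\subseteq N(v)$ be the set of neighbours of $v$ having no neighbour in $J$. Given $J$, the vertex $v$ is always uncovered, and the set $I\cap N[v]$ is distributed as the hard-core model on the graph $G[U\cup\{v\}]$. In this graph $v$ is adjacent to all of $U$. Writing $Z=P_{G[U]}(\lambda)$, we get $p:=\Pr[v\in I\mid J]=\lambda/(\lambda+Z)$. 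The law of total covariance gives
\[
\Cov(\mathbf 1_{v},|I|)=\E\bigl[\Cov(\mathbf 1_v,|I\cap N[v]|\mid J)\bigr]+\Cov\bigl(\E[\mathbf 1_v\mid J],\E[|I|\mid J]\bigr),
\]
because $|J|$ is $J$-measurable and so drops out of the conditional covariance. The first term is an explicit local quantity. If $m=\lambda Z'/Z$ is the mean occupancy of $U$, it equals $p(1-p)(1-m)$ plus a correction coming from $|I\cap N(v)|$.

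Second, I would use the fact that $\Var(|I|)=\lambda\frac{d}{d\lambda}\E|I|$. This lets me trade the awkward second term for a derivative of the local occupancy probability $\Pr[v\in I]=\lambda\,\E[1/(\lambda+Z)]$. In the Davies--Kang argument, one compares $\Pr[v\in I]$ with $\frac{\lambda}{1+\lambda}\Pr[N(v)\text{ uncovered}]$ and uses $|U|\le\Delta$ together with $Z\le(1+\lambda)^{|U|}$ and Jensen-type convexity to reach $E_{K_{\Delta+1}}$. Here I would do the analogous thing for the variance, using the ``local view'' inequalities in two directions. The first is the identity $nE_G=\sum_v\Pr[v\in I]$. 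The second is
\[
nE_G=\sum_v \frac1{\Delta+1}\E\bigl[|I\cap N[v]|\bigr]\quad\text{(up to an inequality from } d(v)\le\Delta).
\]
Eliminating the unknown distribution of $U$ between these, as in the linear-programming step of the occupancy method, reduces the claim to a one-variable inequality in $\lambda$ whose extremal case is $U=N(v)$ a clique of size $\Delta$. That is precisely the local structure of $K_{\Delta+1}$, and there every term equals $\lambda/(1+(\Delta+1)\lambda)^2$.

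The main obstacle is the cross term $\Cov(\E[\mathbf 1_v\mid J],\E[|I|\mid J])$, together with the fact that $m$ may exceed $1$. Neither is sign-definite in general, since covariances between vertices at distance two can be positive. My first attempt would be to avoid the decomposition altogether. I would prove instead the differential inequality $\lambda\frac{d}{d\lambda}E_G\ge\lambda\frac{d}{d\lambda}E_{K_{\Delta+1}}$ directly, by differentiating the local representation $E_G=\frac1n\sum_v\lambda\E[1/(\lambda+Z_v)]$. Then I would show that the derivative of each local term is minimised, over all admissible distributions of the uncovered neighbourhood $G[U]$ with $|U|\le\Delta$, when $G[U]=K_\Delta$ deterministically. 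If that pointwise comparison fails, I would fall back on the total-variance bound $\Var(|I|)\ge\E[\Var(|I|\mid J_S)]$ for a suitably chosen random partition into stars, and average over the choice of partition.

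For the equality case, I would trace where the inequalities are tight. Tightness forces $U=N(v)$ almost surely and $G[N(v)]$ to be complete with $|N(v)|=\Delta$ for every $v$. Hence every component of $G$ is $K_{\Delta+1}$.
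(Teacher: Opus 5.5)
Your proposal lists strategies but does not prove the theorem, and the step meant to carry it fails as formulated. The LP step of the occupancy method works for $E_G$ because, at fixed $\lambda$, $\Pr[v\in I]=\E[\lambda/(\lambda+Z_v)]$ is an average of a function of the local view. $V_G$ is not of this form: $\lambda\frac{d}{d\lambda}\Pr[v\in I]=\Cov(\mathbf{1}_{v\in I},|I|)$, and differentiating $\E[\lambda/(\lambda+Z_v)]$ also differentiates the law of $J$. That produces exactly the cross term $\Cov(\E[\mathbf{1}_{v\in I}\mid J],\E[|I|\mid J])$. So your ``first attempt'' is the same decomposition in disguise, and there is no well-posed minimisation over distributions of $G[U]$.

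Moreover, no per-vertex comparison can hold. Take $G=K_{1,3}$, $v$ the centre, $\lambda=1$. Then $J=\varnothing$, $U=N(v)$ is edgeless, $Z=8$, $p=1/9$, $m=3/2$, so $\Cov(\mathbf{1}_{v\in I},|I|)=p(1-p)(1-m)=-4/81$. This is below the target $\lambda/(1+(\Delta+1)\lambda)^2=1/25$. The edgeless neighbourhood, not $K_\Delta$, gives the smaller local term, and the deficit is repaid only by the leaves' cross terms; you give no mechanism for that redistribution.

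The star-partition fallback is also unsubstantiated, because discarding $\Var(\E[|I|\mid J_S])$ is lossy. With independent colour classes instead of stars, the same conditioning yields only $\Var(|I|)\ge nE_G(\lambda)/((\Delta+1)(1+\lambda))$, short of the target by the factor $(\Delta+1)(1+\lambda)/(1+(\Delta+1)\lambda)$. Your equality analysis inherits these gaps.

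The missing idea is a global one. Let $X=|I|$ and let $\varphi(I)$ be the number of vertices that can be added to $I$.
\begin{itemize}
\item Double counting gives $\E\varphi=\E X/\lambda$ and $\E[X\varphi]=\E[X(X-1)]/\lambda$, hence $\Cov(X,\varphi)=(\Var X-\E X)/\lambda$.
\item Adding a vertex removes at most $\Delta+1$ addable vertices. A Cauchy--Schwarz count over independent sets of size $k$ then shows that $q_k=\E[\varphi\mid X=k]$ satisfies $q_{k+1}\ge q_k-(\Delta+1)$.
\item So $q_k+(\Delta+1)k$ is non-decreasing in $k$, and Chebyshev's association inequality gives $\Cov(X,\varphi)=\Cov(X,q_X)\ge-(\Delta+1)\Var X$.
\item Combined with the identity above, this rearranges to $\Var X\ge\E X/(1+(\Delta+1)\lambda)$.
\item The first-moment bound $\E X\ge n\lambda/(1+(\Delta+1)\lambda)$, which follows from $\varphi(I)\ge n-(\Delta+1)|I|$, finishes the inequality.
\end{itemize}
For equality, one needs $\varphi(I)=n-(\Delta+1)|I|$ for every $I$. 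Testing $I=\{v\}$ gives $\Delta$-regularity. Testing $I=\{u,v\}$ shows non-adjacent vertices have no common neighbour, so every component is $K_{\Delta+1}$.
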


\begin{corollary}\label{th:kn}
Let $G$ be an $n$-vertex graph. Then, for any $\lambda>0$, we have
\[
V_G(\lambda)\geq V_{K_n}(\lambda) = \frac{\lambda}{(1+n\lambda)^2},
\]
with equality if and only if $G\cong K_{n}$.
\end{corollary}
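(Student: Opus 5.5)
The plan is to derive this directly from Theorem~\ref{th:kdelta}, using only the trivial bound $\Delta\le n-1$ for an $n$-vertex graph and the monotonicity of the extremal expression in $\Delta$; no new probabilistic argument should be needed.

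First I will apply Theorem~\ref{th:kdelta} to $G$, with $\Delta$ its maximum degree, to get
\[
V_G(\lambda)\ \ge\ \frac{\lambda}{(1+(\Delta+1)\lambda)^2}.
\]
This also covers the edgeless case $\Delta=0$, where the theorem gives the value $V_{K_1}(\lambda)=\lambda/(1+\lambda)^2$. Next, for fixed $\lambda>0$ the map $t\mapsto \lambda/(1+t\lambda)^2$ is strictly decreasing for $t>0$. Since every vertex has at most $n-1$ neighbours, $\Delta+1\le n$, and therefore
\[
\frac{\lambda}{(1+(\Delta+1)\lambda)^2}\ \ge\ \frac{\lambda}{(1+n\lambda)^2}=V_{K_n}(\lambda).
\]
Chaining the two inequalities gives the claimed bound.

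For the equality case, suppose $V_G(\lambda)=V_{K_n}(\lambda)$. Then both inequalities above must be equalities.
\begin{itemize}
\item Strict monotonicity in $t$ forces $\Delta+1=n$.
\item The equality clause of Theorem~\ref{th:kdelta} forces $G$ to be a disjoint union of copies of $K_{\Delta+1}=K_n$.
\end{itemize}
Since $G$ has exactly $n$ vertices, the union consists of a single copy, so $G\cong K_n$. The converse is immediate.

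I do not expect a real obstacle: all the difficulty lies in Theorem~\ref{th:kdelta}, which I am allowed to assume. The only care needed is to check that the monotonicity step is strict and that degenerate cases ($n=1$, or $G$ edgeless) are handled consistently with the equality characterization.
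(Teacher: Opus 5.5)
Your proposal is correct and follows the paper's own proof essentially verbatim. You apply Theorem~\ref{th:kdelta}, use $\Delta+1\le n$ together with the monotonicity of $t\mapsto \lambda/(1+t\lambda)^2$, and in the equality case deduce $\Delta+1=n$; the equality clause of the theorem then makes $G$ a single copy of $K_n$.
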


 Theorem \ref{th:kdelta} can be viewed as an extension of Theorem \ref{th:Ekdelta}.
By Theorem \ref{th:kdelta}, we can get the following results directly.

\begin{corollary}\label{th:dregular}
Let $G$ be a $d$-regular graph on $n$ vertices. Then, for every $\lambda>0$,
\[
V_G(\lambda)\geq V_{K_{d+1}}(\lambda)
=\frac{\lambda}{\bigl(1+(d+1)\lambda\bigr)^2},
\]
with equality if and only if $G$ is a disjoint union of copies of $K_{d+1}$.
\end{corollary}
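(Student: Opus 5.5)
This corollary follows directly from Theorem \ref{th:kdelta}. I plan to specialize it to regular graphs and then read off the equality case.

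First, let $G$ be $d$-regular on $n$ vertices. Every vertex then has degree $d$, so the maximum degree is $\Delta=d$. The only hypothesis of Theorem \ref{th:kdelta} is a bound on the maximum degree, so it applies with $\Delta=d$. This gives $V_G(\lambda)\geq V_{K_{d+1}}(\lambda)$ for all $\lambda>0$.

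Second, I will check the explicit formula, although the theorem already states it. We have $P_{K_{d+1}}(\lambda)=1+(d+1)\lambda$, so
\[
E_{K_{d+1}}(\lambda)=\frac{\lambda}{1+(d+1)\lambda}.
\]
Applying $\lambda\frac{\partial}{\partial\lambda}$ gives
\[
V_{K_{d+1}}(\lambda)=\frac{\lambda}{\bigl(1+(d+1)\lambda\bigr)^2}.
\]

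Third, for the equality case, Theorem \ref{th:kdelta} says equality holds if and only if $G$ is a disjoint union of copies of $K_{\Delta+1}=K_{d+1}$. Conversely, any disjoint union of copies of $K_{d+1}$ is itself $d$-regular. So the extremal graphs of the theorem all lie inside the class considered here, and the characterization transfers unchanged. Two small points need care. When $d=0$, $G$ is edgeless and $K_1$ copies are exactly such graphs, so the statement still holds. There is also an implicit divisibility condition: equality can only occur when $(d+1)\mid n$.

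I do not expect any real obstacle here. The only step needing a moment's attention is confirming that the equality characterization survives the restriction to regular graphs, and it does because the extremal graphs are themselves regular.
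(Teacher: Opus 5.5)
Your proposal is correct and follows the paper's proof exactly. A $d$-regular graph has maximum degree $\Delta=d$, so Theorem~\ref{th:kdelta} gives both the bound and the equality characterization directly. Your extra checks are also correct but not needed: the explicit computation of $V_{K_{d+1}}(\lambda)$, the case $d=0$, and the observation that equality forces $(d+1)\mid n$.
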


Corollary \ref{th:dregular} extends Theorem 3 in \cite{CR2018}, which states that the lower bound on $E_G(\lambda)$ is $E_{K_{d+1}}(\lambda)$ for a $d$-regular graph $G$ on $n$ vertices,
and the equality holds if and only if $G$ is a disjoint union of copies of $K_{d+1}$.

\begin{corollary}\label{chi'}
Let $G$ be a graph with edge chromatic number $\chi'(G)$. Then, for every $\lambda>0$,
$$
V_G(\lambda) \geq  \frac{\lambda}{\bigl(1+(\chi'(G)+1)\lambda\bigr)^2},
$$
with equality if and only if $G$ is a disjoint union of copies of $K_{\chi'(G)+1}$ where $\chi'(G)$ is either zero or odd.
\end{corollary}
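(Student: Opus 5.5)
Corollary \ref{chi'} follows from Theorem \ref{th:kdelta} combined with Vizing's theorem, together with a monotonicity check on the bounding function. The first step is to handle the edgeless case separately. If $\chi'(G)=0$, then $G$ is a disjoint union of isolated vertices, that is, of copies of $K_1$. Since $V$ is an average over components, $V_G(\lambda)=V_{K_1}(\lambda)=\lambda/(1+\lambda)^2$, so equality holds and the stated characterization is consistent with $\chi'(G)=0$.

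Now suppose $G$ has at least one edge and maximum degree $\Delta\ge 1$. By Vizing's theorem, $\Delta\le\chi'(G)\le\Delta+1$. The function $f(x)=\lambda/(1+(x+1)\lambda)^2$ is strictly decreasing in $x\ge 0$ for fixed $\lambda>0$. Theorem \ref{th:kdelta} gives $V_G(\lambda)\ge f(\Delta)$, and since $\Delta\le \chi'(G)$ we get $f(\Delta)\ge f(\chi'(G))$. This proves the inequality.

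For the equality case, suppose $V_G(\lambda)=f(\chi'(G))$. Then both inequalities in the chain are equalities:
\[
V_G(\lambda)=f(\Delta)=f(\chi'(G)).
\]
Strict monotonicity of $f$ forces $\chi'(G)=\Delta$, and equality in Theorem \ref{th:kdelta} forces $G$ to be a disjoint union of copies of $K_{\Delta+1}$. It remains to find when $\chi'(K_{\Delta+1})=\Delta$. This is the classical fact that $\chi'(K_m)=m-1$ for even $m$ and $\chi'(K_m)=m$ for odd $m$. With $m=\Delta+1$, we have $\chi'=\Delta$ exactly when $\Delta+1$ is even, that is, when $\Delta=\chi'(G)$ is odd.

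Conversely, suppose $G$ is a disjoint union of copies of $K_{k+1}$ with $k=\chi'(G)$ odd, or with $k=0$. Then every component is $K_{k+1}$. The union has edge chromatic number $k$, because $k+1$ is even so $\chi'(K_{k+1})=k$; this is consistent with the hypothesis. Also $V_G(\lambda)=V_{K_{k+1}}(\lambda)=f(k)$ by the component-averaging property, so equality holds.

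The only step needing care is the parity fact $\chi'(K_m)$. For even $m$ it follows from the standard 1-factorization of $K_m$ into $m-1$ perfect matchings. For odd $m$, each color class has at most $(m-1)/2$ edges, so at least $\binom{m}{2}/((m-1)/2)=m$ colors are needed; Vizing's theorem shows $m$ colors suffice. I do not expect a real obstacle here: everything rests on Theorem \ref{th:kdelta}.
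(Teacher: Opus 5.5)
Your proof is correct and follows the paper's argument essentially step for step. Theorem~\ref{th:kdelta}, combined with $\Delta(G)\le\chi'(G)$ and the monotonicity of $x\mapsto\lambda/(1+(x+1)\lambda)^2$, gives the bound; the equality case then reduces to $G$ being a disjoint union of copies of $K_{\Delta+1}$ with $\chi'(K_{\Delta+1})=\Delta$, that is, $\Delta$ odd. The only differences are cosmetic: you cite Vizing's theorem where only the trivial inequality $\Delta\le\chi'(G)$ is needed, and you prove the parity formula for $\chi'(K_m)$ rather than citing it.
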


The Lambert $W$-function is the multivalued inverse of the function
$w\mapsto we^w$. Thus, for every complex number $z$, every value $W(z)$ satisfies
\[
W(z)e^{W(z)}=z.
\]
Throughout this note, $W$ denotes the principal branch $W_0$. Since all
arguments considered here are nonnegative real numbers, for every
$z\geq0$, $W(z)$ is the unique nonnegative real solution $w$ of the
equation $W(z)e^{W(z)}=z$.

Davies \textit{et al.} \cite{DJPR2018} provided a lower bound on $E_G(\lambda)$ for triangle-free graphs with a given maximum degree.
Here we extend this result by proving a lower bound on $V_G(\lambda)$.

\begin{theorem}\label{thm:c3freedegree}
Let $G$ be a triangle-free graph with maximum degree $\Delta$. Then for any $\lambda > 0$,
\[
V_G(\lambda) \ge
\frac{\lambda}{(\Delta+1)(1+\lambda)^2}
\frac{W\!\left(\Delta\log(1+\lambda)\right)}{\Delta\log(1+\lambda)},
\]
where $W$ is the (positive real branch of the) Lambert $W$-function.
\end{theorem}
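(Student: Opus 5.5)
The approach is to lower bound the variance by a conditional variance that splits into independent Bernoulli pieces. Then we bound the resulting expected number of ``uncovered'' vertices by the triangle-free occupancy analysis of Davies \emph{et al.}~\cite{DJPR2018}. Write $X=|I|$ with $I\sim\mu_{G,\lambda}$. For a vertex $v$, say $v$ is \emph{uncovered} if $N_G(v)\cap I=\emptyset$, and put $q_v=\Pr[v \text{ uncovered}]$. By the spatial Markov property, $\Pr[v\in I\mid v\text{ uncovered}]=\lambda/(1+\lambda)$, so $p_v:=\Pr[v\in I]=\frac{\lambda}{1+\lambda}q_v$.

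\textbf{Step 1 (random independent set and total variance).} Independently of $I$, take a uniformly random ordering of $V(G)$. Let $S$ be the set of vertices that precede all of their neighbours; $S$ is independent and $\Pr[v\in S]=1/(d_G(v)+1)\ge 1/(\Delta+1)$. For a fixed independent set $S$, condition on $I\setminus S$. Since $S$ is independent, the events $\{u\in I\}$ for $u\in S$ are conditionally independent. Each $u\in S$ with no neighbour in $I\setminus S$ (equivalently, no neighbour in $I$, since $N(u)\cap S=\emptyset$) lies in $I$ with probability $\lambda/(1+\lambda)$, and the other vertices of $S$ are not in $I$. Hence
\[
\Var(X\mid I\setminus S)=\frac{\lambda}{(1+\lambda)^2}\,\bigl|\{u\in S: u \text{ uncovered}\}\bigr|.
\]
By the law of total variance, $\Var(X)\ge \E[\Var(X\mid I\setminus S)]$ for each fixed $S$. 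Averaging over the independent random ordering gives
\[
nV_G(\lambda)\ \ge\ \frac{\lambda}{(1+\lambda)^2}\sum_{v}\Pr[v\in S]\,q_v\ \ge\ \frac{\lambda}{(\Delta+1)(1+\lambda)^2}\sum_v q_v .
\]

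\textbf{Step 2 (bounding the average uncovered probability).} It remains to show
\[
\frac1n\sum_v q_v\ \ge\ \frac{W(\Delta\log(1+\lambda))}{\Delta\log(1+\lambda)},
\]
which is the DJPR triangle-free bound rewritten via $p_v=\frac{\lambda}{1+\lambda}q_v$. I would reprove it along their lines. Let $Y_v$ be the number of uncovered neighbours of $v$ in the graph with $v$'s closed neighbourhood handled as follows. Because $G$ is triangle-free, $N(v)$ is independent, so conditional on $I\setminus N[v]$ we get $q_v=\E[(1+\lambda)^{-Y_v}]\ge(1+\lambda)^{-\E Y_v}$ by Jensen. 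On the other hand, $\sum_v \E Y_v=\sum_u d(u)\,\Pr[u \text{ uncovered w.r.t. } G-v]$ relates back to $\sum_u p_u$, giving $\frac1n\sum_v\E Y_v\le \Delta\,\frac{1+\lambda}{\lambda}\bar p$ up to the correct bookkeeping. Combining $\bar q\ge(1+\lambda)^{-\Delta c\,\bar q}$ (by convexity) with $x\ge e^{-a x}\iff x\ge W(a)/a$ at $a=\Delta\log(1+\lambda)$ yields the claim. If this bookkeeping is delicate, I would instead simply invoke the statement of \cite{DJPR2018} that $E_G(\lambda)\ge\frac{\lambda}{1+\lambda}\frac{W(\Delta\log(1+\lambda))}{\Delta\log(1+\lambda)}$, and convert it through $\sum_v q_v=\frac{1+\lambda}{\lambda}nE_G(\lambda)$.

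\textbf{Main obstacle.} Step 1 is clean, and the key point there is that $S$ is chosen independently of $I$. The real work is Step 2, namely matching the uncovered-vertex quantity exactly to the DJPR form. If the published bound is stated only for $E_G$, the conversion above is immediate. Otherwise I would redo their Jensen-plus-double-counting argument for $q_v$ directly.
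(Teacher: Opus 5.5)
Your proof is correct and is essentially the paper's argument: you condition on $I$ outside an independent set $S$, use the law of total variance and the product-Bernoulli structure on $S$ to get $\Var(X)\ge\frac{1}{1+\lambda}\sum_{v\in S}\Pr[v\in I]$, gain the factor $\frac{1}{\Delta+1}$ from the choice of $S$, and finish with the bound of \cite{DJPR2018} on $E_G(\lambda)$, which the paper also simply cites. The one difference is the choice of $S$: the paper takes the heaviest class of a proper $(\Delta+1)$-coloring, whereas your random ordering (independent of $I$) even gives the degree-weighted bound $\Var(X)\ge\frac{1}{1+\lambda}\sum_{v}\frac{\Pr[v\in I]}{d_G(v)+1}$; and if you re-derive Step 2 yourself, condition on $I\setminus N_G(v)$ rather than $I\setminus N_G[v]$, since only then does $\Pr[N_G(v)\cap I=\varnothing\mid I\setminus N_G(v)]=(1+\lambda)^{-Y_v}$ hold, with $Y_v$ the number of uncovered neighbours of $v$.
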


\section{Proof of Theorem \ref{th:kdelta}}

\ \indent Before presenting the proof, we first introduce two commonly used formulas in probability theory.

\begin{lemma}\cite[Chapter 9]{BH2019} \label{LTV}
Let $A$ and $B$ be random variables. Then
\begin{enumerate}
\item[(1).] (Law of Total Expectation)  $\E (A) = \E \Big(\E (A\mid B)\Big).$

\item[(2).] (Law of Total Variance) $\Var (A)=\Var \Big(\E (A\mid B)\Big) + \E\Big(\Var (A\mid B)\Big).$
\end{enumerate}
\end{lemma}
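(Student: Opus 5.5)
We will only need the lemma when $A$ and $B$ are functions of a random independent set $I\sim\mu_{G,\lambda}$ on a finite graph. So the plan is to prove it for random variables taking finitely many values, each value of $B$ having positive probability. This avoids every measure-theoretic subtlety. We then note how the general case (with $\E(A^2)<\infty$) follows by the same algebra.

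For part (1), we write $\E(A\mid B)=g(B)$, where for each value $b$ with $\Pr[B=b]>0$ we set
\[
g(b)=\sum_a a\,\Pr[A=a\mid B=b].
\]
Then we compute
\[
\E\bigl(g(B)\bigr)=\sum_b \Pr[B=b]\sum_a a\,\frac{\Pr[A=a,B=b]}{\Pr[B=b]}=\sum_{a,b} a\,\Pr[A=a,B=b]=\sum_a a\,\Pr[A=a]=\E(A).
\]
This is just a rearrangement of a finite double sum.

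For part (2), we start from the conditional variance identity $\Var(A\mid B)=\E(A^2\mid B)-\bigl(\E(A\mid B)\bigr)^2$, which holds pointwise in $b$. We take expectations and apply part (1) to the random variable $A^2$, obtaining
\[
\E\bigl(\Var(A\mid B)\bigr)=\E(A^2)-\E\Bigl(\bigl(\E(A\mid B)\bigr)^2\Bigr).
\]
Separately, applying part (1) once more gives
\[
\Var\bigl(\E(A\mid B)\bigr)=\E\Bigl(\bigl(\E(A\mid B)\bigr)^2\Bigr)-\bigl(\E(\E(A\mid B))\bigr)^2=\E\Bigl(\bigl(\E(A\mid B)\bigr)^2\Bigr)-\bigl(\E(A)\bigr)^2 .
\]
Adding the two displays, the cross term $\E\bigl((\E(A\mid B))^2\bigr)$ cancels, leaving $\E(A^2)-(\E A)^2=\Var(A)$.

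There is no substantive obstacle. The only point needing care is the definition of conditional expectation: in the finite setting we simply restrict to values $b$ of positive probability. For completeness we would remark on the general case. Assuming $\E(A^2)<\infty$, $\E(A\mid B)$ is defined via the Radon--Nikodym theorem, and part (1) is the defining property of conditional expectation applied to the whole sample space. Part (2) then follows from the same algebra, with every term finite by the conditional Jensen inequality.
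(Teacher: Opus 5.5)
Your proof is correct and matches the argument behind the paper's citation. The paper gives no proof here; it refers to Blitzstein--Hwang, Chapter 9, where (1) is proved by the same discrete double-sum rearrangement and (2) by writing $\E\bigl(\Var(A\mid B)\bigr)=\E(A^2)-\E\bigl((\E(A\mid B))^2\bigr)$ and $\Var\bigl(\E(A\mid B)\bigr)=\E\bigl((\E(A\mid B))^2\bigr)-(\E A)^2$, then adding. Restricting to finitely-valued variables costs nothing: every application in the paper (conditioning on $X=|I|$ in the proof of Theorem~\ref{th:kdelta} and on $J$ in the proof of Theorem~\ref{thm:c3freedegree}) lives on the finite space $\mathcal{I}(G)$, where $\lambda>0$ gives every independent set positive probability.
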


Now we turn to the proof of Theorem \ref{th:kdelta}.

\noindent\textit{Proof of Theorem \ref{th:kdelta}.
}
Let $G$ be an $n$-vertex graph with maximum degree $\Delta$, and let $I$ be an independent set of $G$ drawn from the hard-core model at fugacity $\lambda$.
Define
\[
\Phi(I)=\{v:\ v\notin I\text{ and } I\cup\{v\}\in\mathcal{I}(G)\},\qquad
\varphi(I)=|\Phi(I)|,\qquad X_G=|I|.
\]
When no confusion arises, we write $i_k$ for $i_k(G)$, $X$ for $X_G$ and $\varphi$ for $\varphi(I)$.
We set $i_{n+1} = 0$.
Since every independent set of size $k+1$ arises by adding one vertex to exactly $k+1$ independent sets of size $k$, we have
\begin{equation}\label{eq:sumphi}
\sum_{|I|=k}\varphi(I)=(k+1)\,i_{k+1}(G).
\end{equation}
Then we have
\begin{align}
\E (\varphi(I)) =& \sum_{I}\varphi(I)\Pr[I] =\sum_{I}\varphi(I)\frac{\lambda^{|I|}}{P_G(\lambda)} \notag\\
=& \frac{1}{P_G(\lambda)}\sum_{k=0}^{n}\lambda^k \sum_{|I|=k}\varphi(I)\notag\\
=&\frac{1}{P_G(\lambda)}\sum_{k=0}^{n} (k+1)i_{k+1}(G)\lambda^k \notag\\
=& \frac{\sum_{m=1}^{n} mi_{m}(G)\lambda^{m-1}}{P_G(\lambda)} = \frac{1}{\lambda} \mathbb E(X_G). \label{eq:Ephi}
\end{align}

Each vertex in $I$ has at most $\Delta$ neighbors that cannot be added to $I$.
So
\begin{align}\label{eq:varphiI}
\varphi(I)\geq n-\sum_{u\in I} d_G(u)-|I| \geq n-(\Delta+1) |I|.
\end{align}
Taking expectations and using \eqref{eq:Ephi} yields
\[
\frac{\E (X_G)}{\lambda}\geq n-(\Delta+1)\E (X_G),
\]
i.e.,
\begin{equation}\label{eq:EX_lb}
\E (X_G) \geq \frac{n\lambda}{1+(\Delta+1)\lambda}.
\end{equation}

By \eqref{eq:sumphi}, we know that
\begin{equation}\label{eq:sumphi2}
\sum_{|I|=k}k\varphi(I)=k(k+1)i_{k+1}(G).
\end{equation}
Using \eqref{eq:sumphi2}, we have
\begin{align}
\E  (X\varphi(I)) =& \sum_{I} X\varphi(I) \Pr[I] = \sum_{k=0}^{n}\sum_{|I|=k} k\varphi(I) \frac{\lambda^k}{P_G(\lambda)} \notag\\
=&\frac{1}{P_G(\lambda)}\sum_{k=0}^{n} \lambda^k\sum_{|I|=k}k\varphi(I) \notag\\
=& \sum_{k=0}^{n} \frac{k(k+1)i_{k+1}(G)\lambda^k}{P_G(\lambda)} \label{eqEXPhi}.
\end{align}
And
\begin{equation}
\begin{aligned}
\mathbb E(X^2-X)
&= \frac{\sum_{k=1}^{n}(k^2-k) i_k \lambda^k }{P_G(\lambda)}\notag \\
&\overset{m=k-1}{=} \frac{\sum_{m=0}^{n}m(m+1) i_{m+1} \lambda^{m+1} }{P_G(\lambda)} . \label{eqEX2-X}
\end{aligned}
\end{equation}
Combining the above formula with \eqref{eqEXPhi}, we have
\begin{equation}\label{eq:EXphi}
\E  (X\varphi(I)) = \frac{1}{\lambda}\mathbb E(X(X-1)).
\end{equation}
Therefore, by \eqref{eq:Ephi} and \eqref{eq:EXphi}, we have

\begin{align}
\Cov (X,\varphi(I)) =& \E (X\varphi) - \E (X)\E (\varphi) \notag\\
=& \frac{1}{\lambda} \Big( \E (X(X-1))-(\E (X))^2 \Big)\notag\\
=& \frac{1}{\lambda} \Big(\Var (X)-\E (X) \Big). \label{eq:CovXp}
\end{align}

Let $q_k:=\E (\varphi(I)\mid |I|=k)$,
for $k=1,2,\cdots,n$. Particularly, $q_0=n$ and $q_k=0$ if $i_k=0$.
For each index $k$ such that $i_k > 0$, we have
\begin{align}
q_k=&\sum_{I}\varphi(I) \Pr[I\mid |I|=k] = \sum_{|I|=k} \varphi(I) \frac{\frac{\lambda^k}{P_G(\lambda)}}{\frac{i_k(G) \lambda^k}{P_G(\lambda)}} \notag \\
=& \sum_{|I|=k} \frac{\varphi(I)}{i_k(G)} \label{eq:qkphi}\\
=& \frac{(k+1)i_{k+1}(G)}{i_k(G)}. \label{eq:qkik}
\end{align}

Adding a vertex $u \in \Phi(I)$ to an independent set $I$ can block at most $\Delta+1$ vertices from $\Phi(I)$,
hence $\varphi(I\cup\{u\})\geq \varphi(I)-(\Delta+1)$.
Summing over all $u\in\Phi(I)$ and then over all $I$ with $|I|=k$,
we have
\begin{equation}\label{eq:IUu}
\sum_{|I|=k} \sum_{u\in\Phi(I)} \varphi(I\cup\{u\}) \geq \sum_{|I|=k} \varphi(I)(\varphi(I)-(\Delta+1)).
\end{equation}
The left-hand side sums over all independent sets $I$ of size $k$ and vertices \(u \in \Phi(I)\), computing \(\varphi(I \cup \{u\})\).
For each independent set $J$ of size $k+1$, we have $J=I\cup\{u\}$ for some $I$ with $|I|=k$ and $u$.
Specifically, for a fixed $J$, there are exactly $k+1$ ways to choose a vertex $u\in J$  with $I=J\setminus \{u\}$ as an independent set of size $k$ and $u\in\Phi(I)$.
Each such pair $(I, u)$ contributes \(\varphi(J)\) to the left-hand side.
Thus \(\varphi(J)\) is counted $k+1$ times for each $J$.
So
\begin{equation}\label{eq:IJ}
\sum_{|I|=k} \sum_{u\in\Phi(I)} \varphi(I\cup\{u\}) = \sum_{|J|=k+1}(k+1)\varphi(J).
\end{equation}
Substituting \eqref{eq:IJ} into \eqref{eq:IUu}, we have
\begin{equation*}\label{eq:J>}
\sum_{|J|=k+1}(k+1)\varphi(J)\geq \sum_{|I|=k} \varphi(I)^2 - (\Delta+1)\sum_{|I|=k} \varphi(I).
\end{equation*}

Applying the \textit{Cauchy--Schwarz inequality}:
\[
\left(\sum_{|I|=k}\varphi(I)\cdot 1 \right)^2\leq \left( \sum_{|I|=k} \varphi(I)^2 \right) \cdot i_k(G),
\]
we obtain
\begin{align} \label{eq:phiJ}
(k+1)\sum_{|J|=k+1} \varphi(J)\geq \frac{\left(\sum_{|I|=k}\varphi(I) \right)^2}{i_k(G)}- (\Delta+1)\sum_{|I|=k}\varphi(I).
\end{align}
 By \eqref{eq:qkphi}, we have $\sum_{|I|=k}\varphi(I)=i_k\,q_k$ and $\sum_{|J|=k+1}\varphi(J)=i_{k+1}\,q_{k+1}$.
Substituting them into \eqref{eq:phiJ}, we have
\begin{equation}\label{ineq-new}
(k+1)i_{k+1}q_{k+1}\geq i_k q_k^{2}-(\Delta+1) i_k q_k.
\end{equation}
If \(i_k=0\), then \(q_k=0\).
If \(q_k=0\), then the desired inequality \eqref{eq:qk_rec} is trivial, since $q_{k+1}\ge 0 \ge q_k-(\Delta+1).$
Thus, we may assume that \(i_kq_k\neq 0\).
Using \eqref{eq:qkik}, dividing on both sides of Inequality (\ref{ineq-new}) by $i_k\,q_k$ yields
\begin{equation}\label{eq:qk_rec}
q_{k+1}\geq q_k-(\Delta+1).
\end{equation}

Write $q_X=\E (\varphi(I)\mid X)$.
By Lemma \ref{LTV} (1), we know that
\[
\E (\varphi(I)) = \E (\E (\varphi(I)\mid X)) = \E (q_X),
\]
\[
\E (X\varphi(I)) = \E (\E (X\varphi(I)\mid X)) =\E (X\E (\varphi(I)\mid X)) = \E (Xq_X).
\]
So we obtain that
\begin{equation}\label{eq:Covp=q}
\Cov (X,\varphi(I))=\Cov (X,q_X).
\end{equation}

Let $\eta_k:= q_k+ (\Delta+1)k$.
By \eqref{eq:qk_rec}, $\eta_k$ is non-decreasing in $k$.
Let $X'$ be an independent and identically distributed copy of $X$.
So $\E (X')=\E (X)$ and $\E (\eta_{X'})=\E (\eta_X)$.
Therefore
\[
0\leq \E \left[ (X-X')(\eta_X-\eta_{X'}) \right] =  2\left(\E (X\eta_X) -\E (X)\cdot\E (\eta_X) \right) = 2 \Cov (X,\eta_X),
\]
i.e.,
\[
\Cov (X,\eta_X)\geq 0.
\]
Expanding the covariance gives
\begin{align*}
\Cov (X,\eta_X)=&\Cov (X,q_X+ (\Delta+1)X) = \Cov (X,q_X) + (\Delta+1)\Cov (X,X) \\
=& \Cov (X,q_X) + (\Delta+1)\Var (X) \geq 0.
\end{align*}
Combining it with \eqref{eq:Covp=q}, we have
\begin{equation}\label{eq:cov_lb}
\Cov (X,\varphi(I))\geq -(\Delta+1)\Var (X).
\end{equation}
Substituting \eqref{eq:cov_lb} into \eqref{eq:CovXp}, we have
\[
\frac{1}{\lambda} (\Var (X)-\E (X)) \geq -(\Delta+1)\Var (X).
\]
By \eqref{eq:EX_lb}, we have
\begin{align}\label{eq:V>E}
\Var (X_G)\geq \frac{\E (X_G)}{1+(\Delta+1)\lambda} \geq \frac{n\lambda}{(1+(\Delta+1)\lambda)^2}.
\end{align}

Since $K_{\Delta+1}$ is the complete graph on $\Delta+1$ vertices,
its independence polynomial is
$P_{K_{\Delta+1}}(\lambda)=1+(\Delta+1)\lambda$.
Then
\[
V_{K_{\Delta+1}}(\lambda)
= \frac{1}{\Delta+1}\,\Var (X_{K_{\Delta+1}})
= \frac{\lambda}{\bigl(1+(\Delta+1)\lambda\bigr)^{2}}.
\]
By definition, we conclude
\[
V_G(\lambda)
= \frac{1}{n}\,\Var (X_G)
\geq \frac{\lambda}{\bigl(1+(\Delta+1)\lambda\bigr)^{2}}
= V_{K_{\Delta+1}}(\lambda). \qedhere
\]

Next we prove that equality holds if and only if $G$ is a disjoint union of copies of $K_{\Delta+1}$.
 Assume that $G$ satisfies $V_G(\lambda) = (\lambda)\big/\bigl(1+(\Delta+1)\lambda\bigr)^{2}$.
Then, by \eqref{eq:V>E}, we know that
\begin{align}\label{eq:EX=}
\E (X_G) = \frac{n\lambda}{1+(\Delta+1)\lambda}.
\end{align}
 Define
\begin{align*}\label{def:DI}
D(I):=\varphi(I)-\bigl(n-(\Delta+1)|I|\bigr).
\end{align*}
By \eqref{eq:varphiI}, we obtain $D(I)\geq0$.
 Taking expectation and using \eqref{eq:Ephi} and \eqref{eq:EX=}, we have
\[
\E (D(I)) =
\frac{\E (X_G)}{\lambda} -n+(\Delta+1)\E (X_G) =0.
\]
 Since $\lambda>0$, every independent set has positive probability under
the hard-core measure. Therefore,
$$ D(I)=0 $$
for every independent set $I$.
 For an arbitrary vertex $v$ in $G$, taking $I=\{v\}$, we have
\[
\varphi(I)=n-1-d_G(v).
\]
 Therefore,
\[
0=D(\{v\})=\Delta-d_G(v).
\]
Thus $G$ is $\Delta$-regular.

 Let $u$ and $v$ be two non-adjacent vertices  in $G$. Then $\{u,v\}$ is an independent set of $G$.
 So, we have
\begin{align*}
\varphi(\{u,v\}) &= n-2- | N_G(u)\cup N_G(v)| \notag \\
&=n-2- \left( 2\Delta - |N_G(u)\cap N_G(v)| \right).
\end{align*}
Consequently,
\begin{align}\label{Duv=0}
0=D(\{u,v\})=\lvert N_G(u)\cap N_G(v)\rvert.
\end{align}

 Thus no two non-adjacent vertices have a common neighbor.
We claim that every connected component of $G$ is complete.
Indeed, if a connected component is not complete, then there exist two non-adjacent vertices at distance~2, which have a common neighbor. This contradicts \eqref{Duv=0}.

 Hence $G$ is a $\Delta$-regular graph in which every connected component has exactly $\Delta+1$ vertices.
That is,  $G \cong sK_{\Delta+1}$ for some positive integer $s$.

 Conversely, when $G$ is a disjoint union of copies of $K_{\Delta+1}$, the conclusion clearly holds from the definition of $V_G(\lambda)$.
This completes the proof of the result.

\begin{remark}
As an immediate consequence of \eqref{eq:EX_lb}, we can obtain the result in Theorem~\ref{th:Ekdelta}. Indeed, since
$ nE_G(\lambda)=\mathbb{E}(X_G), $
we have
$$
E_G(\lambda) =\frac{\mathbb{E}(X_G)}{n}
\geq \frac{\lambda}{1+(\Delta+1)\lambda}
= E_{K_{\Delta+1}}(\lambda).
$$
\end{remark}

\section{Proofs of Corollaries \ref{th:kn}, \ref{th:dregular} and \ref{chi'}}

\ \indent In this section, based on the result of Theorem \ref{th:kdelta}, we focus on the proofs of Corollaries \ref{th:kn}, \ref{th:dregular} and \ref{chi'}.

\begin{proof}[Proof of Corollary \ref{th:kn}]
Let $G$ be an $n$-vertex graph and $\Delta$ be the maximum degree of $G$.
 By Theorem \ref{th:kdelta} and $\Delta\le n-1$, we obtain
\[
V_G(\lambda)\ge \frac{\lambda}{(1+(\Delta+1)\lambda)^2}
\ge \frac{\lambda}{(1+n\lambda)^2}
=V_{K_n}(\lambda).
\]
 Next we prove the condition under which equality holds.
If $V_G(\lambda)=V_{K_n}(\lambda)$, then we have
$\frac{\lambda}{(1+(\Delta+1)\lambda)^2} = \frac{\lambda}{(1+n\lambda)^2}$,
which means that $n=\Delta+1$.
By Theorem \ref{th:kdelta}, $G$ is the disjoint union of copies of $K_{\Delta+1}$ when $V_G(\lambda)=\frac{\lambda}{(1+(\Delta+1)\lambda)^2}$.
Therefore, since $n=\Delta+1$, $G$ has only one component, i.e., $G\cong K_n$.
This completes the proof.
\end{proof}

Similarly, we can prove Corollary \ref{th:dregular} as follows.

\begin{proof}[Proof of Corollary \ref{th:dregular}]
Let $G$ be a $d$-regular $n$-vertex graph.
Then the maximum degree of $G$ is $d$.
By Theorem \ref{th:kdelta}, we obtain
\[
V_G(\lambda)\ge \frac{\lambda}{(1+(d+1)\lambda)^2}
=V_{K_{d+1}}(\lambda).
\]
Equality holds if and only if $G$ is a disjoint union of copies of $K_{d+1}$.
This completes the proof of Corollary \ref{th:dregular}.
\end{proof}

Next we turn to the proof of Corollary \ref{chi'}.

\begin{proof}[Proof of Corollary \ref{chi'}]
Let $G$ be a graph with edge chromatic number $\chi'(G)$.
When $\chi'(G)=0$,  $G$ consists of isolated vertices, and the conclusion clearly holds.
Therefore, in the following proof, we assume that $\chi'(G)\ge 1$.
Since every proper edge coloring assigns distinct colors to the edges incident with any fixed vertex, we have
$$
\Delta(G)\leq\chi'(G).
$$
Let $k:=\chi'(G)$.
By Theorem \ref{th:kdelta}, we have
$$
V_G(\lambda) \geq \frac{\lambda}{\bigl(1+(\Delta+1)\lambda\bigr)^2} \geq \frac{\lambda}{\bigl(1+(k+1)\lambda\bigr)^2}.
$$

By Theorem 1 (ii) of \cite{BCC1967}, for $n\ge 2$, we obtain
\[
\chi'(K_n)=
\begin{cases}
n-1, & \text{if $n$ is even},\\
n,   & \text{if $n$ is odd}.
\end{cases}
\]

Let $G$ be a graph with  $\chi'(G)=k$ such that $V_G(\lambda)=\lambda\big/\bigl(1+(k+1)\lambda\bigr)^2$.
Then we know that $\Delta(G)=k$ and,  from Theorem \ref{th:kdelta}, $G$ must be a disjoint union of copies of $K_{k+1}$.
If $k$ is odd, then $k+1$ is even and
$$
\chi'(K_{k+1})=k.
$$
So the disjoint union of copies of $K_{k+1}$ attains the equality. If
$k\geq2$ is even, then
$$
\chi'(K_{k+1})=k+1,
$$
which contradicts the assumption that $\chi'(G)=k$. So no graph with edge chromatic number $k$ can attain the equality.
Therefore, equality holds if and only if $G$ is a disjoint union of copies of $K_{\chi'(G)+1}$ where $\chi'(G)$ is either zero or odd.
\end{proof}

\section{Proof of Theorem \ref{thm:c3freedegree}}

\ \indent In this section we will prove the result in Theorem \ref{thm:c3freedegree}.

Let $I$ be a random independent set of graph $G$ drawn from the hard-core model at fugacity $\lambda$.
We say a vertex $v$ is \textit{occupied} if $v\in I$ and \textit{unoccupied} otherwise.

 \begin{lemma}\label{le:c3freemaxdeg}\cite{DJPR2018}
Let $G$ be a triangle-free graph with maximum degree $\Delta$. Then, for any $\lambda > 0$,
\[
E_G(\lambda) \ge
\frac{\lambda}{1+\lambda}
\frac{W\!\left(\Delta\log(1+\lambda)\right)}{\Delta\log(1+\lambda)},
\]
where $W$ is the (positive real branch of the) Lambert $W$-function.
\end{lemma}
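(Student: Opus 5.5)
The plan is the occupancy method of \cite{DJPR2018}. Write $\alpha:=E_G(\lambda)$. Let $I$ be drawn from $\mu_{G,\lambda}$ and let $v$ be a uniformly random vertex, independent of $I$. Call a vertex $u$ \emph{uncovered} (with respect to $I$) if $N_G(u)\cap I=\emptyset$. For a uniformly random vertex we have
\[
\Pr[v\in I]=\frac{\E(|I|)}{n}=\alpha .
\]
I will compute $\alpha$ in two different ways, obtaining two lower bounds in terms of the same random quantity. For the vertex $v$ these will be phrased through $Y$, the number of \emph{externally uncovered} neighbours of $v$: those $u\in N_G(v)$ with no neighbour in $I\setminus N_G[v]$.

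\textbf{Step 1 (the vertex itself).} Condition on $J:=I\setminus N_G[v]$. Since $G$ is triangle-free, $N_G(v)$ is an independent set. Given $J$, the conditional law of $I\cap N_G[v]$ is therefore the hard-core model on the star formed by $v$ and its $Y$ externally uncovered neighbours. Its partition function is $\lambda+(1+\lambda)^{Y}$, so
\[
\Pr[v\in I\mid J]=\frac{\lambda}{\lambda+(1+\lambda)^{Y}}\;\ge\;\frac{\lambda}{1+\lambda}\,(1+\lambda)^{-Y}.
\]
Averaging over $J$ and $v$ and applying Jensen (the map $y\mapsto(1+\lambda)^{-y}$ is convex) gives
\[
\alpha\ \ge\ \frac{\lambda}{1+\lambda}\,(1+\lambda)^{-\E Y}.
\]

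\textbf{Step 2 (the neighbours).} Each vertex $u$ lies in at most $\Delta$ neighbourhoods. Hence
\[
\Delta\,\alpha\;\ge\;\frac1n\sum_{v}\E\bigl|N_G(v)\cap I\bigr| .
\]
On the same star, each externally uncovered neighbour of $v$ is occupied with conditional probability
\[
\frac{\lambda(1+\lambda)^{Y-1}}{\lambda+(1+\lambda)^{Y}} .
\]
I want to bound the resulting expectation below by $\frac{\lambda}{1+\lambda}\E Y$, up to the same kind of correction as in Step~1. The cleaner way I would try first avoids the star computation entirely. Use the identity $\Pr[u\in I]=\frac{\lambda}{1+\lambda}\Pr[u\text{ uncovered}]$, valid for every vertex, and redefine $Y$ as the number of neighbours of $v$ that are uncovered with respect to all of $I$. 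Then
\[
\Delta\alpha\ \ge\ \frac{\lambda}{1+\lambda}\,\E Y .
\]
Step~1 must then be redone with this $Y$, and it is to be checked that this redone Step~1 still goes through. This is the point that needs care: the two steps must refer to one and the same random variable $Y$, and Step~1 must still produce a bound of the form $\frac{\lambda}{1+\lambda}(1+\lambda)^{-\E Y}$. I expect this matching to be the main technical obstacle.

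\textbf{Step 3 (optimisation).} Set $z:=\E Y$ and $L:=\log(1+\lambda)$. Steps~1--2 give
\[
\alpha\ \ge\ \frac{\lambda}{1+\lambda}\max\Bigl\{e^{-zL},\ \frac{z}{\Delta}\Bigr\}.
\]
The first term is decreasing in $z$ and the second is increasing, so the maximum is at least its value at the crossing point $e^{-zL}=z/\Delta$. Multiplying the crossing equation by $L$ gives $zL\,e^{zL}=\Delta L$, hence
\[
z=\frac{W(\Delta L)}{L},\qquad \frac{z}{\Delta}=\frac{W(\Delta\log(1+\lambda))}{\Delta\log(1+\lambda)} .
\]
Substituting this value yields exactly the bound claimed in Lemma~\ref{le:c3freemaxdeg}.
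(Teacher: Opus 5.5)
The paper does not prove this lemma; it quotes it from Davies, Jenssen, Perkins and Roberts. Your outline is their occupancy-method argument: two lower bounds on $\alpha$ through one random variable $Y$, then Jensen, then the crossing-point optimisation, which you carry out correctly.

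The gap is exactly the step you postponed. You never obtain Steps~1 and~2 for the same $Y$, and with Step~1 as written neither of your two definitions works.

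\emph{First choice, $Y_{\mathrm{ext}}$} (neighbours with no neighbour in $I\setminus N_G[v]$). Here Step~2 is false, not merely unproved. The star computation gives $\E(|N_G(v)\cap I|\mid J)=\frac{\lambda Y(1+\lambda)^{Y-1}}{\lambda+(1+\lambda)^{Y}}<\frac{\lambda}{1+\lambda}Y$ for $Y\ge 1$. For $G=K_2$ one has $Y_{\mathrm{ext}}\equiv 1$, while $\Delta\alpha=\frac{\lambda}{1+2\lambda}<\frac{\lambda}{1+\lambda}$.

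\emph{Second choice, $Y_{\mathrm{unc}}$} (neighbours uncovered by all of $I$). Here Step~2 is right. But in a triangle-free graph $Y_{\mathrm{unc}}=Y_{\mathrm{ext}}\mathbf{1}_{\{v\notin I\}}\le Y_{\mathrm{ext}}$. So your Step~1 bound $(1+\lambda)^{-\E Y_{\mathrm{ext}}}$ is smaller than the required $(1+\lambda)^{-\E Y_{\mathrm{unc}}}$, and monotonicity cannot transfer it.

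The missing idea is the choice of conditioning. Condition on $I\setminus N_G(v)$, the configuration off the \emph{open} neighbourhood (so the status of $v$ is included), not on $I\setminus N_G[v]$.
\begin{itemize}
\item Because $G$ is triangle-free, $N_G(u)\cap N_G(v)=\emptyset$ for $u\in N_G(v)$. Hence $Y_{\mathrm{unc}}$ is a function of $I\setminus N_G(v)$.
\item If $v\in I$, then $Y_{\mathrm{unc}}=0$ and $N_G(v)\cap I=\emptyset$ surely.
\item If $v\notin I$, the vertices of $N_G(v)$ that may still be occupied are exactly the $Y_{\mathrm{unc}}$ uncovered neighbours. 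They are pairwise non-adjacent, and each is occupied independently with probability $\lambda/(1+\lambda)$.
\end{itemize}
In both cases $\Pr[N_G(v)\cap I=\emptyset\mid I\setminus N_G(v)]=(1+\lambda)^{-Y_{\mathrm{unc}}}$. Combined with $\Pr[v\in I]=\frac{\lambda}{1+\lambda}\Pr[N_G(v)\cap I=\emptyset]$, this gives the exact identity
\[
\alpha=\frac{\lambda}{1+\lambda}\,\E\bigl[(1+\lambda)^{-Y_{\mathrm{unc}}}\bigr].
\]
Jensen then yields Step~1 for the same variable as Step~2, and your Step~3 completes the proof.
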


 Next we turn to the proof of  Theorem \ref{thm:c3freedegree}.

 \begin{proof}[Proof of Theorem \ref{thm:c3freedegree}]
Let $G$ be an $n$-vertex triangle-free graph with maximum degree $\Delta$.
When $\Delta(G)=0$, the conclusion clearly holds.
Therefore, in the following proof, we assume that $\Delta(G)\ge 1$.

A proper vertex coloring of the vertex set $V(G)$ is performed.
Since the maximum degree of $G$ is $\Delta$,
the vertex set can be partitioned into at most $\Delta+1$ chromatic classes:
\[
V(G)= S_1\cup S_2\cup\cdots\cup S_r, \qquad r\le \Delta+1.
\]
 Note that $S_i\in \mathcal{I}(G)$ for $i=1,2,\cdots,r$.
 Since
$$
\sum_{i=1}^{r}\sum_{v\in S_i}\Pr[v\in I] = \sum_{v\in V(G)}\Pr[v\in I],
$$
 by averaging, there exists a color class $S=S_i$ such that
$$
\sum_{v\in S}\Pr[v\in I] \geq \frac{1}{r}\sum_{v\in V(G)}\Pr[v\in I]
\geq \frac{1}{\Delta+1}\sum_{v\in V(G)}\Pr[v\in I].
$$
 By the definition of $E_G(\lambda)=\frac{1}{n}\sum_{v\in V(G)} \Pr[v\in I]$, we have
\begin{align}\label{specialS}
\sum_{v\in S}\Pr[v\in I] \ge \frac{nE_G(\lambda)}{\Delta+1}.
\end{align}

 Let
$$
J:=I\cap(V(G)\setminus S),\quad\text{and}
$$

$$
A(J)=\{u:\ u\in S \text{ and } N_G(u)\cap J=\varnothing\}.
$$

 Assume that $v\in S$.
 If $v\notin A(J)$, then $v$ has at least one neighbor belonging to $J$. So $v$ cannot be occupied by $I$.
 If $v\in A(J)$, then $v$ can be occupied.  Moreover, since $S$ contains no edges, any subset $B$ of $A(J)$ can be combined with $J$ to form an independent set. Consequently,
\[
\begin{aligned}
\Pr[I\cap S =B\mid J]
&= \frac{\lambda^{|J|+|B|}}
        {\displaystyle\sum_{C \subseteq A(J)} \lambda^{|J|+|C|}} \\[4pt]
&= \frac{\lambda^{|B|}}
        {\displaystyle\sum_{C \subseteq A(J)} \lambda^{|C|}} \\[4pt]
&= \frac{\lambda^{|B|}}{(1+\lambda)^{|A(J)|}}.
\end{aligned}
\]

Let $ p=\lambda/(1+\lambda). $
Then the above expression becomes
\begin{align}\label{eq:IB}
\Pr[I\cap S=B\mid J]=p^{|B|}(1-p)^{|A(J)|-|B|}.
\end{align}
Let $Y_v=\mathbf 1_{\{v \in I\}}$ for each $v\in A(J)$.
Then each $Y_v\mid J$ is a Bernoulli random variable with success probability $p$.
By the spatial Markov property of the hard-core model, we obtain
\[
\Pr [Y_v=1\mid J]=\Pr [v\in I \mid J] =p, \qquad \Pr [Y_v=0\mid J]=\Pr [v\notin I \mid J] =1-p.
\]

For every $B\subseteq A(J)$, the event $I\cap S=B$ specifies the joint configuration in which \(Y_v=1\) for \(v\in B\) and \(Y_v=0\) for \(v\in A(J)\setminus B\).
Hence, the left-hand side of \eqref{eq:IB} is a joint conditional probability, while the right-hand side of \eqref{eq:IB} is the product of the corresponding marginal conditional probabilities.
Since this factorization holds for every $B\subseteq A(J)$, conditionally on $J$, the random variables $(Y_v)_{v\in A(J)}$ are mutually independent Bernoulli random variables with parameter $p$.

 Let $X=|I|$.
The number of selected vertices outside $S$ is fixed as $|J|$, hence
\[
X = |J| + \sum_{v \in A(J)} \mathbf 1_{\{v \in I\}} =  |J| + \sum_{v \in A(J)} Y_v.
\]

 Since the variance of the fixed constant $|J|$ is zero,
we have
\begin{align*}
\Var (X\mid J) =& \Var(|J|+ \sum_{v\in A(J)} Y_v \mid J )\\
=&  \Var(\sum_{v\in A(J)} Y_v \mid J) \\
=& \sum_{v\in A(J)} \Var(Y_v \mid J) \\
=& |A(J)| p (1-p).
\end{align*}

 By Lemma \ref{LTV} (2), we have
\begin{align}
\Var(X) =& \E \bigl( \Var(X \mid J) \bigr) + \Var\bigl( \E[X \mid J] \bigr)\notag\\
\ge& p(1-p) \E (|A(J)|). \label{eq:varlow}
\end{align}

 For any $v \in S$, it is occupied only if it belongs to set $A(J)$.
Therefore, the probability that $v$ is occupied is:
\[
\Pr[v\in I] = p \Pr [v \in A(J)].
\]
 Summing over $v \in S$, we get
\begin{align}\label{eq:pvS}
\sum_{v \in S} \Pr[v\in I] =& p \sum_{v \in S} \Pr[v \in A(J)] \notag\\
=& p \sum_{v\in S}\E \left(\mathbf{1}_{v\in A(J)}\right) \notag\\
=& p \E \left( \sum_{v\in S} \mathbf{1}_{v\in A(J)} \right) \notag\\
=&  p \E (|A(J)|).
\end{align}
 Substituting \eqref{eq:pvS} into \eqref{eq:varlow}, we get
\begin{align*}
\operatorname{Var}(X)
&\geq p(1-p) \mathbb E (|A(J)|) \\
&= (1-p) \sum_{v \in S} \Pr[v\in I] \\
&= \frac{1}{1+\lambda} \sum_{v \in S} \Pr[v\in I].
\end{align*}

 By \eqref{specialS}, we obtain
\begin{align}\label{eq:Varfinal}
\operatorname{Var}(X) \ge \frac{1}{1+\lambda} \sum_{v \in S} \Pr[v\in I]
\ge \frac{n\,E_G(\lambda)}{(\Delta+1)(1+\lambda)}.
\end{align}

 Combining with Lemma \ref{le:c3freemaxdeg} and \eqref{eq:Varfinal}, we obtain
\begin{align*}
V_G(\lambda) =& \frac{\operatorname{Var}(X)}{n} \ge \frac{E_G(\lambda)}{(\Delta+1)(1+\lambda)}\\
\ge& \frac{\lambda}{(1+\lambda)^2(1+\Delta)}
\frac{W\!\left(\Delta\log(1+\lambda)\right)}{\Delta\log(1+\lambda)}.
\end{align*} This finishes the proof.
\end{proof}

\section*{Acknowledgments}
This work is partially supported by NNSF of China (No.\ 12271251).

\end{document}